\newtheorem{thm}{Theorem}[section]
\newtheorem{cor}[thm]{Corollary}
\newtheorem{lem}[thm]{Lemma}
\newtheorem{pro}[thm]{Proposition}
\newenvironment{ack}{\noindent{\bf Acknowledgments}}
\newenvironment{proof}{\noindent{\bf Proof. }}{\hfill\framebox[2mm]{}}
\begin{document}

\title{Yokota theory, the invariant trace fields of hyperbolic knots and the Borel regulator map
\footnote{2010 Mathematics Subject Classification: Primary 57M27;Secondly 57M50, 12F99, 51M25, 58MJ28, 19F27.}
}
\author{\sc Jinseok Cho}
\maketitle
\begin{abstract}
For a hyperbolic link complement with a triangulation, there are hyperbolicity equations of the triangulation, 
which guarantee the hyperbolic structure of the link complement.
In this paper, we explain that 
the number of the essential solutions of the equations is equal to or bigger than
the extension degree of the invariant trace field of the link.

On the other hand, Yokota suggested a potential function of a hyperbolic knot,
which gives the hyperbolicity equations and the complex volume of the knot.
Applying the fact above to his theory, we explain that the potential function also gives
all the values of the Borel regulator map and the complex volumes of the parabolic representations.
Furthermore, we explain the maximum value of the imaginary parts of the complex volumes
is the volume of the complete hyperbolic structure of the knot complement.
This enables us to calculate the complex volume of the knot complement combinatorially 
from the knot diagram in many cases. 

Especially, if the number of the essential solutions of the hyperbolicity equations
and the extension degree of the invariant trace field are the same, 
then the evaluation of all essential complex solutions of the hyperbolicity equations 
to the imaginary part of the potential function
is the same with the Borel regulator map. We show these actually happens in the case of the twist knots.
\end{abstract}

\section{Main Results}\label{sec1}

This article explains certain relations between the extended Bloch group theory and Yokota theory, 
and reports a new interpretation of Yokota theory using well-known results. 
Each parts of the results in this article are already known to different groups of researchers, 
but the author believes the whole story is not well-known to researchers.
Many contents rely on the lectures of Neumann in KIAS in spring 2010, which are also contained
in \cite{NeumannLecture}.

For a hyperbolic link $L$, we consider a topological ideal triangulation of $S^3-L$. To get a hyperbolic structure
of the link complement, we parameterize each tetrahedra with complex variables and then write down
certain equations which guarantee edge relations and cusp conditions as in Chapter 4 of \cite{Thurston}.
We call the set of equations {\it hyperbolicity equations}. 

If some variables in a solution of the hyperbolicity equations become 0, 1 or $\infty$, 
we call the solution {\it non-essential solution}. Note that if the hyperbolicity equations have an {\it essential solution},
each solution induces a parabolic representation of $\pi_1(S^3-L)$ into ${\rm PSL}(2,\mathbb{C})$. 
(See Section 6 of \cite{Takahashi85} for a reference.)
Therefore, if the hyperbolicity equations have an essential solution, then they have unique solution 
which gives the hyperbolic structure to the link complement by the Mostow rigidity.
We call the unique solution {\it geometric solution}.

Consider a triangulation of $S^3-L$ and let $\mathcal{H}$ be the set of hyperbolicity equations
with variables $x_1,\ldots, x_n$. 
Assume $\mathcal{H}$ has an essential solution and choose the geometric solution 
$(x_1^{(0)},\ldots,x_n^{(0)})$.
Note that, for a link complement, the invariant trace field and the trace field are the same. Furthermore,
in our consideration, they become $\mathbb{Q}(x_1^{(0)},\ldots,x_n^{(0)})$. (See \cite{Reid1} or \cite{Neumann00}).
We put the invariant trace field $k(L):=\mathbb{Q}(x_1^{(0)},\ldots,x_n^{(0)})$. Also note that
all $x_k^{(0)}$'s are algebraic numbers and the extension degree $[k(L):\mathbb{Q}]$ is a finite number.

Now we introduce a theorem. Note that this is not a new result, but was already known to several researchers.

\begin{thm}\label{thm}
Assume the hyperbolicity equations $\mathcal{H}$ have the geometric solution $(x_1^{(0)},\ldots,x_n^{(0)})$.
Then the number of the essential solutions of $\mathcal{H}$ is equal to or bigger than the extension degree $[k(L):\mathbb{Q}]$.
\end{thm}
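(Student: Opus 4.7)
The plan is to exploit the fact that the hyperbolicity equations have rational (in fact integer) coefficients after clearing denominators, so the Galois action of $\mathrm{Gal}(\overline{\mathbb{Q}}/\mathbb{Q})$ preserves the solution set, and then to show that the orbit of the geometric solution has size at least $[k(L):\mathbb{Q}]$ and consists entirely of essential solutions. In other words, embeddings $\sigma:k(L)\hookrightarrow\mathbb{C}$ produce the needed distinct essential solutions.

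First I would write down the standard form of the hyperbolicity equations from Chapter 4 of \cite{Thurston}: each edge relation and each cusp condition is a monomial equation in the $x_i$ and $1-x_i$ equated to $\pm 1$, hence clearing denominators gives polynomial equations over $\mathbb{Z}$. Thus if $(x_1^{(0)},\ldots,x_n^{(0)})$ satisfies $\mathcal{H}$ and $\sigma$ is any field embedding of $k(L)=\mathbb{Q}(x_1^{(0)},\ldots,x_n^{(0)})$ into $\mathbb{C}$, then $(\sigma(x_1^{(0)}),\ldots,\sigma(x_n^{(0)}))$ is again a solution of $\mathcal{H}$. Since $k(L)/\mathbb{Q}$ is a finite separable extension, there are exactly $[k(L):\mathbb{Q}]$ such embeddings.

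Next I would check the two necessary properties of the resulting solutions. For distinctness, if two embeddings $\sigma,\tau$ yield the same tuple, then $\sigma$ and $\tau$ agree on each $x_i^{(0)}$ and hence on the generated field $k(L)$, so $\sigma=\tau$; therefore we obtain $[k(L):\mathbb{Q}]$ pairwise distinct solutions. For essentialness, observe that $\sigma(x_i^{(0)})\in\{0,1\}$ would force $x_i^{(0)}\in\{0,1\}$ (since $\sigma$ fixes $\mathbb{Q}$), contradicting the fact that the geometric solution is essential; and $\sigma(x_i^{(0)})$ can never equal $\infty$ because $\sigma$ takes values in the field $\mathbb{C}$. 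Combining these two observations immediately yields the claimed lower bound.

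The main obstacle, if any, is simply the input from the theory of Kleinian groups that identifies $k(L)$ with $\mathbb{Q}(x_1^{(0)},\ldots,x_n^{(0)})$, which is quoted from \cite{Reid1} and \cite{Neumann00} and used freely in the statement; once this identification is granted, the proof reduces to the elementary Galois-theoretic observation above. A minor care point is the precise form of the hyperbolicity equations: one must be sure that they have been normalized so that their coefficients lie in $\mathbb{Q}$ (rather than, say, involving the geometric solution itself in some implicit choice of branches), but this is standard in the Thurston formulation.
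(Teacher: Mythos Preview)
Your proposal is correct and follows essentially the same approach as the paper: apply each of the $[k(L):\mathbb{Q}]$ embeddings $k(L)\hookrightarrow\mathbb{C}$ to the geometric solution, then verify that the resulting tuples are distinct essential solutions. If anything, your argument is slightly more complete, since you explicitly justify essentialness (via $\sigma$ fixing $\mathbb{Q}$), a point the paper asserts without comment.
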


\begin{proof} 
By the ``Theorem of the Primitive Elements", $k(L)=\mathbb{Q}[x]/\langle f(x)\rangle$ for some irreducible polynomial with degree $[k(L):\mathbb{Q}]$. Also the extension degree $[k(L):\mathbb{Q}]$
is the number of embeddings from $k(L)=\mathbb{Q}[x]/\langle f(x)\rangle$ to $\mathbb{C}$.
For convenience, we assume each $x_1^{(0)},\ldots,x_n^{(0)}$ are expressed by 
some elements of $\mathbb{Q}(x)$. (This assumption implies one of the embeddings is the identity map.)

Choose an embedding $\tau : k(L) \longrightarrow\mathbb{C}$. 
For any hyperbolicity equation $h(x_1,\ldots,x_n)=0\in\mathcal{H}$,
$$h(\tau(x_1^{(0)}),\ldots,\tau(x_n^{(0)}))=\tau(h(x_1^{(0)},\ldots,x_n^{(0)}))=0.$$
Therefore $(\tau(x_1^{(0)}),\ldots,\tau(x_n^{(0)}))$ becomes an essential solution of $\mathcal{H}$.

Furthermore, for another embedding $\tau^{\prime}$, 
if $(\tau(x_1^{(0)}),\ldots,\tau(x_n^{(0)}))=(\tau^{\prime}(x_1^{(0)}),\ldots,\tau^{\prime}(x_n^{(0)}))$,
then $\tau=\tau^{\prime}$ by the definition of $k(L)=\mathbb{Q}(x_1^{(0)},\ldots,x_n^{(0)})$.
Therefore, different embeddings of $k(L)$ gives different essential solutions of $\mathcal{H}$.

\end{proof}

A quick application of Theorem \ref{thm} is that the number of essential solutions of $\mathcal{H}$ becomes
an upper bound of the extension degree. For example, let $C(a_1,\ldots,a_m)$ be the 2-bridge link in Conway notation
satisfying $m\geq2, , ~a_1\geq 2,~a_m\geq 2$ and $a_k>0 ~(k=2,\ldots,m-1)$ as in \cite{Ohnuki05} and \cite{Sakuma95}. 
We remark the ideal triangulation of Sakuma-Weeks in \cite{Sakuma95} has the geometric solution because
it was shown in \cite{Sakuma95} that the triangulation is combinatorially equivalent to the canonical decomposition in \cite{Epstein88}.
Using the ideal triangulation, we obtain the following corollary.

\begin{cor}
The following inequality holds for the 2-bridge link $C(a_1,\ldots,a_m)$.
$$[k(C(a_1,\ldots,a_m)):\mathbb{Q}]\leq\left[\frac{\alpha(a_1,\ldots,a_m)-1}{2}\right],$$
where $\alpha(a_1,\ldots,a_m)$ is a positive integer defined by the recursive formula
$$\alpha(\emptyset)=1,~ \alpha(a_1)=a_1, ~
\alpha(a_1,\ldots,a_{k})=a_{k}\cdot\alpha(a_1,\ldots,a_{k-1})+\alpha(a_1,\ldots,a_{k-2}),$$
and $[x]$ is the integer part of $x\in \mathbb{R}$.
\end{cor}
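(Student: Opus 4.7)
The plan is to apply Theorem~\ref{thm} to the Sakuma--Weeks ideal triangulation of $S^3-C(a_1,\ldots,a_m)$. By the remark immediately preceding the corollary, this triangulation is combinatorially equivalent to the Epstein--Penner canonical decomposition, so its system of hyperbolicity equations $\mathcal{H}$ admits a geometric solution and Theorem~\ref{thm} applies. It therefore suffices to bound the number of essential solutions of $\mathcal{H}$ by $[(\alpha(a_1,\ldots,a_m)-1)/2]$.

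First I would use the fact, already recalled in the introduction, that every essential solution of $\mathcal{H}$ induces a parabolic representation $\pi_1(S^3-C(a_1,\ldots,a_m))\to\mathrm{PSL}(2,\mathbb{C})$, and check that two essential solutions that produce conjugate parabolic representations must coincide. This yields an injection from the set of essential solutions of $\mathcal{H}$ into the set of conjugacy classes of irreducible parabolic $\mathrm{PSL}(2,\mathbb{C})$-representations of the link group.

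Second, I would invoke the classical enumeration due to Riley. The recursion $\alpha(\emptyset)=1$, $\alpha(a_1)=a_1$, $\alpha(a_1,\ldots,a_k)=a_k\,\alpha(a_1,\ldots,a_{k-1})+\alpha(a_1,\ldots,a_{k-2})$ is precisely the numerator of the continued fraction $[a_1,\ldots,a_m]$, so $C(a_1,\ldots,a_m)$ is the 2-bridge link with fraction $\alpha/\beta$ for $\alpha=\alpha(a_1,\ldots,a_m)$. Riley's analysis of the two-bridge group via his polynomial shows that the number of conjugacy classes of irreducible parabolic $\mathrm{PSL}(2,\mathbb{C})$-representations is exactly $[(\alpha-1)/2]$. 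Combining this count with the injection of the previous paragraph gives the desired bound.

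The main obstacle I expect is the injectivity step: although the passage from an essential solution to a parabolic representation is standard, ruling out the possibility that two distinct essential solutions of the Sakuma--Weeks equations yield conjugate representations requires tracing how the shape parameters on each tetrahedron determine the peripheral data of the developing map. Once this is established, the matching with Riley's roots is automatic, but setting up the dictionary between the Sakuma--Weeks shape variables and Riley's parameter is where the real work will lie.
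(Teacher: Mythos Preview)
Your plan diverges from the paper at the point where you bound the number of essential solutions. The paper does not pass through representations at all: it simply invokes LEMMA~II.5.8 of \cite{Sakuma95}, which already bounds the number of essential solutions of the Sakuma--Weeks hyperbolicity equations by $\left[\frac{\alpha(a_1,\ldots,a_m)-1}{2}\right]$, and then applies Theorem~\ref{thm}. That is the entire argument.

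Your route---essential solutions $\hookrightarrow$ conjugacy classes of parabolic representations, then Riley's count---is a genuine alternative, but the step you flag as the main obstacle is a real one, not merely a bookkeeping matter. Injectivity from shape solutions to conjugacy classes of representations is \emph{not} automatic for an arbitrary ideal triangulation: distinct essential solutions can in principle yield conjugate holonomy, since the shapes carry more data than the representation (they encode a particular pseudo-developing map, and there can be non-conjugate developing maps with conjugate holonomy). Establishing injectivity for the Sakuma--Weeks triangulation specifically would require essentially the same combinatorial analysis that Sakuma and Weeks already carry out to prove their Lemma~II.5.8, so you would not be avoiding that work. A secondary caution: Riley's polynomial is usually stated for two-bridge \emph{knots}; when $\alpha$ is even, $C(a_1,\ldots,a_m)$ is a two-component link and the count of parabolic conjugacy classes needs a separate justification.

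In short, your approach could be made to work, but it reproves (or presupposes) the content of the cited Sakuma--Weeks lemma rather than bypassing it; the paper's one-line citation is the economical path.
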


\begin{proof}
This is a direct consequence of LEMMA II.5.8 of \cite{Sakuma95} and Theorem \ref{thm} above.

\end{proof}

In fact, the above application is not new because it was already shown in Chapter 4.5 of \cite{Reid1} that 
$$[k(C(a_1,\ldots,a_m)):\mathbb{Q}]\leq\left[\frac{p-1}{2}\right],$$
where $p,q$ are relatively prime positive integers satisfying
$$\frac{q}{p}=\cfrac{1}{a_1+\cfrac{1}{a_2+\cdots+\cfrac{1}{a_m}}}.$$
Note that $\alpha(a_1,\ldots,a_m)=p$ is a well-known property of continued fractions.

On the other hand, if we apply the idea of Theorem \ref{thm} to Yokota theory,
we can have a new insight on Yokota theory. We will explain it in the following.

Kashaev conjectured the following relation in \cite{Kashaev97} :
  $$\text{vol}(L)=2\pi \lim_{N\rightarrow\infty}\frac{\log|\langle L\rangle_N|}{N},$$
  where $L$ is a hyperbolic link, vol($L$) is the hyperbolic volume of $S^3-L$,
  $\langle L\rangle_N$ is the $N$-th Kashaev invariant.
After that, the generalized conjecture was proposed in \cite{Murakami02} that 
  $$i(\text{vol}(L)+i\,\text{cs}(L))\equiv 2\pi i\lim_{N\rightarrow\infty}\frac{\log\langle L\rangle_N}{N}~~({\rm mod}~\pi^2),$$
  where cs($L$) is the Chern-Simons invariant of $S^3-L$ defined in \cite{Meyerhoff86} 
  with the normalization of modulo $\pi^2$.
  
The calculation of the actual limit of the Kashaev invariant is very hard, and only few cases are known.
(The known results until now can be found in \cite{Veen}.)
On the other hand, while proposing the conjecture, Kashaev used some formal approximation to predict
the actual limit. His formal approximation was formulated as {\it optimistic limit} by H. Murakami in \cite{Murakami00b}.
Although the optimistic limit is not yet proved to be the actual limit of the Kashaev invariant,
Yokota made a very useful way to calculate the optimistic limit using his potential function.

For a hyperbolic knot $K$ and its diagram $D$ with certain conditions, 
Yokota defined a potential function $V(z_1,\ldots,z_n)$
and a triangulation of the knot complement such that the set of the hyperbolicity equations becomes
\begin{equation}\label{hypeq}
\mathcal{H}=\left\{\exp\left(z_k\frac{\partial V}{\partial z_k}\right)=1~:~k=1,\ldots,n\right\}.
\end{equation}
(We will explain the definition of $V(z_1,\ldots,z_n)$ in detail in Section \ref{sec2}.)
From now on, we always assume $\mathcal{H}$ has the geometric solution $(z_1^{(0)},\ldots,z_n^{(0)})$.
Let
$$V_0(z_1,\ldots,z_n):=V(z_1,\ldots,z_n)-\sum_{k=1}^n z_k\frac{\partial V}{\partial z_k}\log z_k.$$
Then Yokota proved, in \cite{YokotaPre}, that the optimistic limit of 
$ 2\pi i\frac{\log\langle L\rangle_N}{N}$ becomes
$V_0(z_1^{(0)},\ldots,z_n^{(0)})$\footnote{
We remark that the Kashaev invariant of a knot $K$ defined in \cite{YokotaPre} 
is the one of the mirror image $\overline{K}$ defined in \cite{Murakami01a}. 
This article follows the definition of \cite{YokotaPre}.} and 
$${\rm Im}V_0(z_1^{(0)},\ldots,z_n^{(0)})={\rm vol}(K).$$ 
After that, he generalized it in \cite{Yokota10} to
\begin{equation}\label{cv}
V_0(z_1^{(0)},\ldots,z_n^{(0)})\equiv i ({\rm vol}(K)+i\,{\rm cs}(K))\text{   (mod } \pi^2),
\end{equation}
using the extended Bloch group theory of \cite{Neumann04} and \cite{Zickert09}.

Our second application of Theorem \ref{thm} came from a question that what happens if we evaluate
other solutions of $\mathcal{H}$ to $V_0(z_1,\ldots,z_n)$. It turns out the result is related to 
the Borel regulator map and the complex volumes of the parabolic representations.

In Yokota theory, a solution $(z_1,\ldots,z_n)$ of $\mathcal{H}$ determines 
complex parameters $(t_1,\ldots,t_s)$ of ideal tetrahedra of Yokota triangulation. 
(This will be explained in detail in Section \ref{sec2}.) 
For a chosen solution $(z_1,\ldots,z_n)$ of $\mathcal{H}$, if $t_m\notin\mathbb{R}$ for some $m=1,\ldots,s$,
we call the solution {\it complex solution}.
Likewise, if $t_m\neq 0,1,\infty$ for all $m=1,\ldots,s$, then we call the solution {\it essential solution}.
It is a well-known fact that
if there exists an essential solution of $\mathcal{H}$, then there exists the geometric solution of $\mathcal{H}$.
(For reference, see Section 2.8 of \cite{Tillmann05}.)

Consider Yokota triangulation of $S^3-K$ as in \cite{YokotaPre} or \cite{Yokota10}. 
For the invariant trace field $k(K)=\mathbb{Q}[x]/\langle f(x)\rangle$ of the knot $K$, 
let $[k(K):\mathbb{Q}]=2r_1+r_2$ and 
$\tau_1,\overline{\tau}_1,\ldots,\tau_{r_1},\overline{\tau}_{r_1}$ be the complex embeddings
$k(K)\rightarrow\mathbb{C}$
and $\tau_{r_1+1}\ldots,\tau_{r_2}$ be the real embeddings $k(K)\rightarrow\mathbb{R}$.
Also let $(t_1^{(0)},\ldots,t_s^{(0)})$ be the parameters of the ideal tetrahedra in Yokota triangulation, which give
the complete hyperbolic structure to $S^3-K$. 
Then, $k(K)=\mathbb{Q}(t_1^{(0)},\ldots,t_s^{(0)})$. 
We assume each $t_1^{(0)},\ldots,t_s^{(0)}$ are expressed by some elements of $\mathbb{Q}(x)$.
As explained in \cite{Yokota10} or in Section \ref{sec2}, $z_1^{(0)},\ldots,z_n^{(0)}$ can be expressed by
fractions of $t_1^{(0)},\ldots,t_s^{(0)}$, 
so we can also assume each $z_1^{(0)},\ldots,z_n^{(0)}$ are expressed by some elements of $\mathbb{Q}(x)$.
For $j=1,\ldots,r_1$, the $j$-th component of the Borel regulator map is defined by
$${\rm Borel}(S^3-K)_j:=\sum_{m=1}^{s}D_2(\tau_j(t_m^{(0)})),$$
where $D_2(t)={\rm Im~Li_2}(t)+\log|t|\arg(1-t)$ for $t\in \mathbb{C}-\{0,1\}$ is the Bloch-Wigner function 
and ${\rm Li}_2(t)=-\int_0^t\frac{\log(1-z)}{z}dz$ is the dilogarithm function.
(See \cite{Neumann00} or \cite{Neumann99} for details.)

\begin{cor}\label{cor}
Let $z_k^{(j)}:=\tau_j(z_k^{(0)})$ for $j=1,\ldots,r_1$. Then
$${\rm Im } V_0(z_1^{(j)},\ldots,z_n^{(j)})={\rm Borel}(S^3-K)_j.$$
Especially, if 
\begin{equation}\label{number}
[k(K):\mathbb{Q}]=(\text{the number of the essential solutions of }\mathcal{H}),
\end{equation}
then ${\rm Im} V_0(z_1,\ldots,z_n)$ becomes a plus or minus of an element of the Borel regulator map
for any essential complex solution $(z_1,\ldots,z_n)$ of $\mathcal{H}$.
\end{cor}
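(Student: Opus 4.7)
The plan is to combine Theorem \ref{thm} with Yokota's complex volume identity (\ref{cv}), exploiting the Galois-equivariance of both. First, I would apply Theorem \ref{thm}: each complex embedding $\tau_j:k(K)\to\mathbb{C}$ sends the geometric solution $(z_1^{(0)},\ldots,z_n^{(0)})$ to an essential solution $(z_1^{(j)},\ldots,z_n^{(j)})$ of $\mathcal{H}$. Since the tetrahedral shape parameters $t_m$ of the Yokota triangulation are explicit fractions of the $z_k$ (to be recalled in Section \ref{sec2}), the shape parameters of this new essential solution are simply $t_m^{(j)}=\tau_j(t_m^{(0)})$.

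Next, I would argue that Yokota's derivation of (\ref{cv}) in \cite{Yokota10} uses only the hypothesis that $(z_1,\ldots,z_n)$ is an essential solution of $\mathcal{H}$, and not that it is the geometric one. Indeed, (\ref{cv}) is established by assembling the shape parameters together with canonical log-branches read off from $V$ into an element of the extended Bloch group, and then identifying $V_0$ (up to the usual $\pi^2$-indeterminacy) with a Rogers-type dilogarithm that computes the complex volume of the associated parabolic representation $\rho$. The ingredients of this construction---edge matchings, log-branches coming from $V$, and the flattening conditions---depend on $(z_1,\ldots,z_n)$ only through $\mathcal{H}$ and the combinatorics of the triangulation, both of which are preserved under $(z_k^{(0)})\mapsto(z_k^{(j)})$. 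Hence
$$V_0(z_1^{(j)},\ldots,z_n^{(j)})\equiv i\bigl({\rm vol}(\rho_j)+i\,{\rm cs}(\rho_j)\bigr)\pmod{\pi^2}.$$
Taking imaginary parts is harmless, since $\pi^2\in\mathbb{R}$, and the volume of $\rho_j$ is the sum of Bloch--Wigner dilogarithms of its shape parameters, so
$${\rm Im}\,V_0(z_1^{(j)},\ldots,z_n^{(j)})=\sum_{m=1}^{s}D_2(t_m^{(j)})=\sum_{m=1}^{s}D_2(\tau_j(t_m^{(0)}))={\rm Borel}(S^3-K)_j.$$

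For the final assertion, under hypothesis (\ref{number}) the inequality in Theorem \ref{thm} becomes an equality, so every essential solution of $\mathcal{H}$ is of the form $\sigma(z^{(0)})$ for some embedding $\sigma:k(K)\to\mathbb{C}$. An essential \emph{complex} solution forces $\sigma$ to be a complex embedding, i.e.\ $\sigma\in\{\tau_j,\overline{\tau}_j\}$ for some $j\in\{1,\ldots,r_1\}$. The identity $D_2(\overline{t})=-D_2(t)$ then shows that the $\overline{\tau}_j$-case yields $-{\rm Borel}(S^3-K)_j$, so in either case ${\rm Im}\,V_0$ at the chosen essential complex solution equals $\pm\,{\rm Borel}(S^3-K)_j$, as claimed.

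The main obstacle will be the second step: verifying that the extended-Bloch-group argument of \cite{Yokota10} is insensitive to which essential solution is substituted into $V_0$. Concretely, one must check that the combinatorial data and the log-branch choices forced by $V$ still satisfy the flattening conditions when the geometric tuple is replaced by an arbitrary Galois conjugate, so that the whole construction is Galois-equivariant in the required sense. Once this is granted, the remainder of the argument is essentially formal.
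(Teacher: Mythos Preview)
Your argument is correct, but it takes a heavier route than the paper does. You reach the key identity ${\rm Im}\,V_0(z^{(j)})=\sum_m D_2(t_m^{(j)})$ by invoking the extended Bloch group machinery of \cite{Yokota10} and \cite{Zickert09} for an arbitrary essential solution (this is in fact exactly how the paper proves the stronger Corollary~\ref{cor2}). For Corollary~\ref{cor} itself, however, the paper gives a direct elementary computation (Proposition~\ref{prop}): using only that $(z_1,\ldots,z_n)$ satisfies $\mathcal{H}$, one checks by hand that ${\rm Im}\,V_0(z_1,\ldots,z_n)-\sum_m\sigma_m D_2(t_m^{\sigma_m})=0$, by pairing up the $\log|z_k|$-terms coming from the derivative of $V$ against those in the definition of $D_2$. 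This avoids your ``main obstacle'' entirely: there is no need to verify Galois-equivariance of flattenings or log-branches, since the identity holds for any essential solution as a matter of elementary algebra. Your approach buys you the full complex volume (real part included), which is why the paper reserves it for Corollary~\ref{cor2}; the paper's approach buys self-containment and sidesteps the delicate issue you correctly flagged. Your treatment of the ``especially if'' clause via $D_2(\bar t)=-D_2(t)$ is fine and matches what the paper leaves implicit.
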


By Theorem \ref{thm}, we know that all the embeddings $k(K)\rightarrow\mathbb{C}$
are induced by some essential solutions of $\mathcal{H}$, but
there can exist some extra essential solutions which do not induce the embeddings.
However, the condition (\ref{number}) guarantees that there are no extra essential solutions.
We remark that there are many examples satisfying this condition.
Especially we will show that the twist knots actually satisfy (\ref{number}) in Section \ref{sec4}.

Although we will introduce a simple proof of Corollary \ref{cor} in Section \ref{sec3},
we remark Corollary \ref{cor} was already suggested in \cite{Zickert09} in a more general form as follows.

\begin{cor}\label{cor2}
Let $\mathrm{z}:=(z_1,\ldots,z_n)$ be any essential solution of $\mathcal{H}$ and
$\rho_{\mathrm{z}}:\pi_1(S^3-K)\rightarrow{\rm PSL}(2,\mathbb{C})$
be the parabolic representation induced by $\mathrm{z}$. Then
$$V_0(\mathrm{z})\equiv i({\rm vol}(\rho_{\mathrm{z}})+i\,{\rm cs}(\rho_{\mathrm{z}}))\text{ \rm  (mod } \pi^2),$$
where ${\rm vol}(\rho_{\mathrm{z}})+i\,{\rm cs}(\rho_{\mathrm{z}})$ is the complex volume of $\rho_{\mathrm{z}}$
defined in \cite{Zickert09}. Furthermore, for any essential solution $\mathrm{z}$ and the geometric solution
$\mathrm{z}^{(0)}:=(z_1^{(0)},\ldots,z_n^{(0)})$, the following inequality holds:
\begin{equation}\label{coreq}
  |{\rm Im} V_0(\mathrm{z})|\leq{\rm Im} V_0(\mathrm{z}^{(0)})={\rm vol}(S^3-K).
\end{equation}
\end{cor}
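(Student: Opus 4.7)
The plan is to extend Yokota's proof in \cite{Yokota10} of formula (\ref{cv}) from the geometric solution to an arbitrary essential solution, and then to invoke classical volume rigidity for the inequality (\ref{coreq}).

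For the congruence, I would first recall that any essential solution $\mathrm{z}$ of $\mathcal{H}$ determines the parameters $(t_1,\ldots,t_s)$ of the ideal tetrahedra in Yokota's triangulation and, by Section 6 of \cite{Takahashi85}, also a parabolic representation $\rho_{\mathrm{z}}:\pi_1(S^3-K)\to\mathrm{PSL}(2,\mathbb{C})$. The collection of shape parameters $t_m$ then yields a combinatorial flattening in the sense of \cite{Neumann04}, and evaluating the Rogers--Neumann dilogarithm on the resulting element of the extended pre-Bloch group returns $i(\mathrm{vol}(\rho_{\mathrm{z}})+i\,\mathrm{cs}(\rho_{\mathrm{z}}))$ modulo $\pi^2$, with the complex volume defined as in \cite{Zickert09}. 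This is the exact setup of \cite{Yokota10}; the only point where geometricity of $\mathrm{z}^{(0)}$ entered there was to identify the output with $\mathrm{vol}(K)+i\,\mathrm{cs}(K)$. Since the flattening relations that make Yokota's symbol manipulation work are encoded precisely by $\mathcal{H}$, they hold for every essential solution, so the same argument produces $V_0(\mathrm{z})\equiv i(\mathrm{vol}(\rho_{\mathrm{z}})+i\,\mathrm{cs}(\rho_{\mathrm{z}}))\pmod{\pi^2}$.

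For the inequality, once the congruence is available we have $\mathrm{Im}\,V_0(\mathrm{z})=\mathrm{vol}(\rho_{\mathrm{z}})$. The classical volume rigidity for parabolic representations (the Gromov--Thurston inequality, together with the sharpening due to Dunfield and Francaviglia) asserts that $|\mathrm{vol}(\rho)|\leq\mathrm{vol}(S^3-K)$ for every representation $\rho:\pi_1(S^3-K)\to\mathrm{PSL}(2,\mathbb{C})$, with equality iff $\rho$ is conjugate to the discrete faithful representation. Applying this to $\rho_{\mathrm{z}}$ and using that $\mathrm{z}^{(0)}$ induces the discrete faithful representation yields (\ref{coreq}); the equality $\mathrm{Im}\,V_0(\mathrm{z}^{(0)})=\mathrm{vol}(S^3-K)$ is just (\ref{cv}).

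The main obstacle is the first part: one must verify that the ``flattening'' Yokota reads off from the potential function $V$ is still a valid extended Bloch group element when $\mathrm{z}$ is any essential solution rather than the geometric one. The flattening conditions are pointwise logarithmic identities derived from $\exp(z_k\partial V/\partial z_k)=1$, so they remain valid, but tracking the branch choices and the edge/cusp relations through Yokota's bookkeeping in \cite{Yokota10} to ensure no use was made of the specific geometric $(t_m^{(0)})$ is the technical crux; once this is checked, both conclusions follow from the machinery already in place in \cite{Neumann04}, \cite{Zickert09}, and \cite{Yokota10}.
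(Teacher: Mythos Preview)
Your proposal is correct and follows essentially the same route as the paper: for the congruence you observe that Yokota's argument in \cite{Yokota10} via Zickert's formula in \cite{Zickert09} applies verbatim to any parabolic representation, not only the geometric one, and for the inequality you invoke the volume rigidity of Francaviglia (and Dunfield), exactly as the paper does by citing Corollary~5.10 of \cite{Francaviglia04}. Your additional discussion of checking the flattening conditions for non-geometric essential solutions is more cautious than the paper, which simply asserts that Zickert's formula already covers this case, but the underlying strategy is identical.
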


\begin{proof}
Yokota, in \cite{Yokota10}, used Zickert's formula of \cite{Zickert09} to prove the identity (\ref{cv}),
but Zickert's formula also holds for any parabolic representation $\rho_{\mathrm{z}}$.
Therefore, the first statement was already proved in \cite{Yokota10}.

On the other hand, it was proved in Corollary 5.10 of \cite{Francaviglia04} that, 
for any representation $\rho:\pi_1(S^3-K)\rightarrow{\rm PSL}(2,\mathbb{C})$,
$$|{\rm vol}(\rho)|\leq{\rm vol}(S^3-K),$$
so the inequality (\ref{coreq}) holds trivially.

\end{proof}

We remark that (\ref{coreq}) gives us a tool to determine
the volume of a hyperbolic knot complement combinatorially from the knot diagram using Yokota theory.
Furthermore, from Gromov-Thurston-Goldman rigidity in \cite{Dunfield99},
we can also obtain the Chern-Simons invariant combinatorially.

Although Yokota theory needs several assumptions, this method can be useful for many cases.
For example, if we apply it to the twist knots, we obtain the same potential functions $V(z_0,\ldots,z_n)$ 
as in (2.3) of \cite{Cho09b}.
We already know the existence of an essential solution of the twist knots. (This is explained in Section \ref{sec4}.) 
By the virtue of Gromov-Thurston-Goldman rigidity,
we can find the geometric solution by evaluating all the essential solutions
of (2.1) in \cite{Cho09b} to $V(z_0,\ldots,z_n)$ and picking up the unique solution that gives the maximal volume.
We can obtain the complex volume of the knot complement by evaluating the geometric solution to the potential function $V(z_0,\ldots,z_n)$.

\section{Definition of $V(z_1,\ldots,z_n)$ in Yokota theory}\label{sec2}

In this section, we explain the way to define $V(z_1,\ldots,z_n)$ following \cite{Yokota10}, \cite{YokotaPre} with an example Figure \ref{pic1}.
Note that Figure \ref{pic1} was already appeared in \cite{YokotaPre} as Figure 9.

\begin{figure}
\centering
  \subfigure[Knot]
  {\includegraphics[scale=0.4]{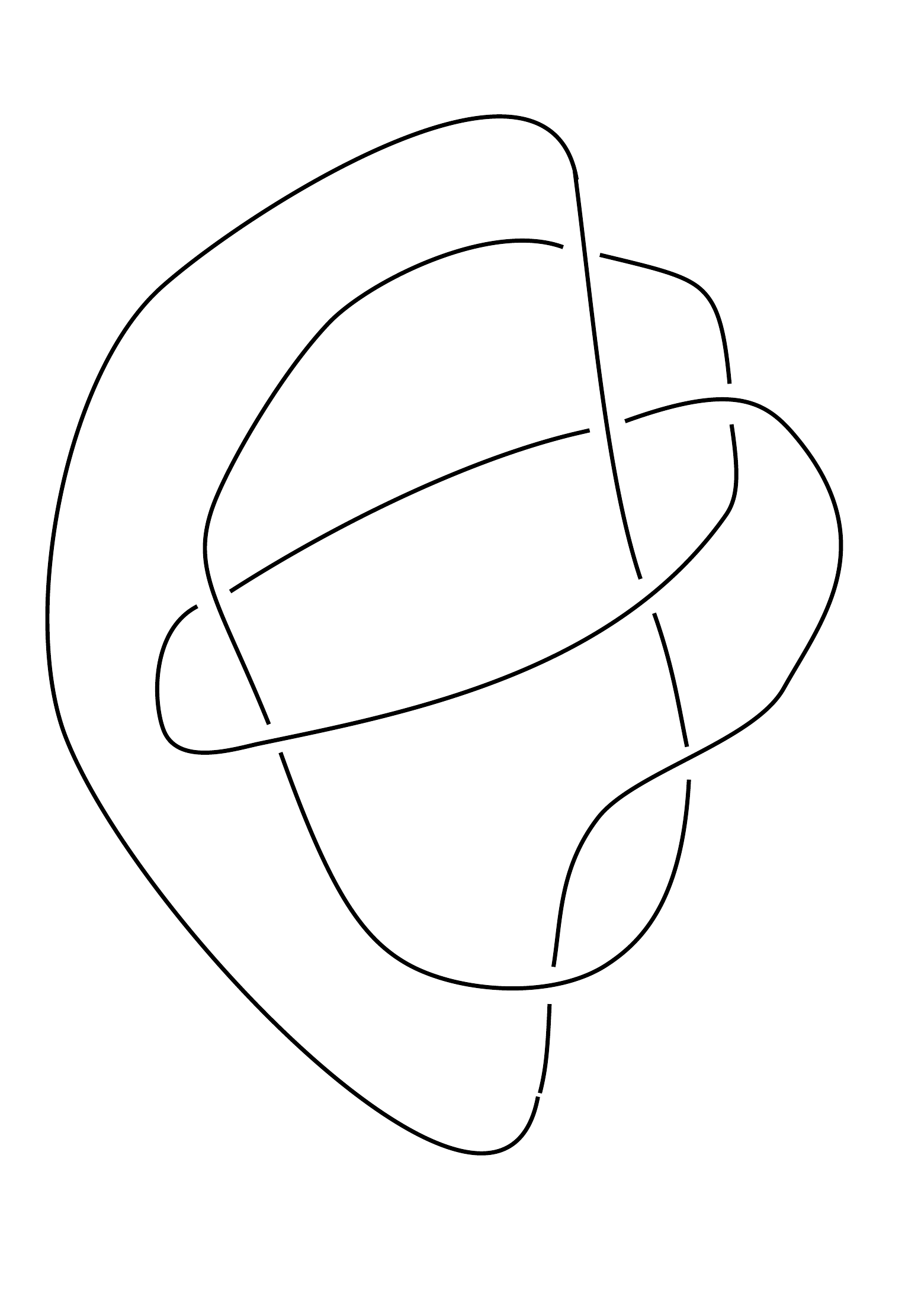}}
  \subfigure[(1,1)-tangle]
  {\includegraphics[scale=0.5]{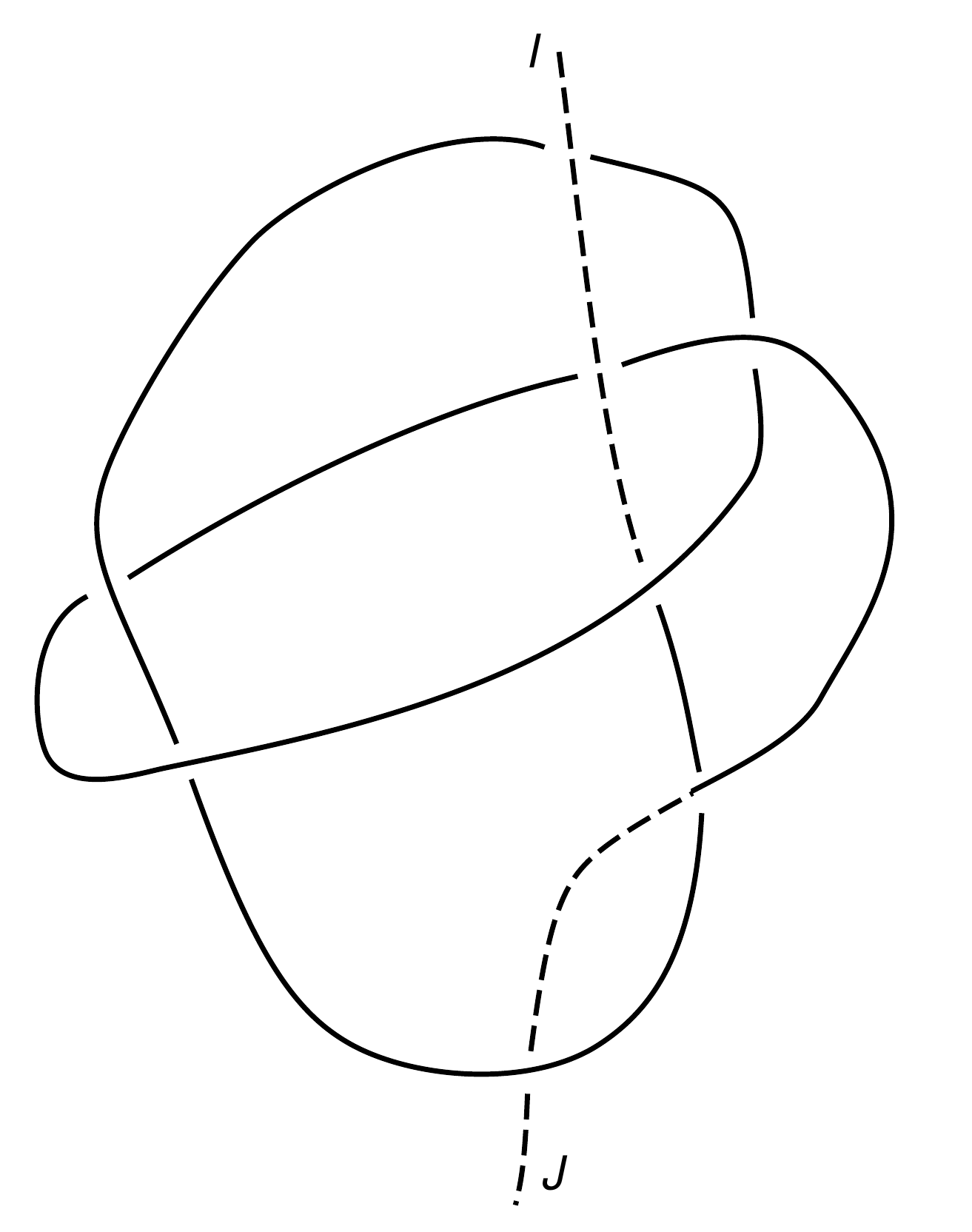}}\\
  \subfigure[$G$ is obtained by removing $I\cup J$]
  {\includegraphics[scale=0.5]{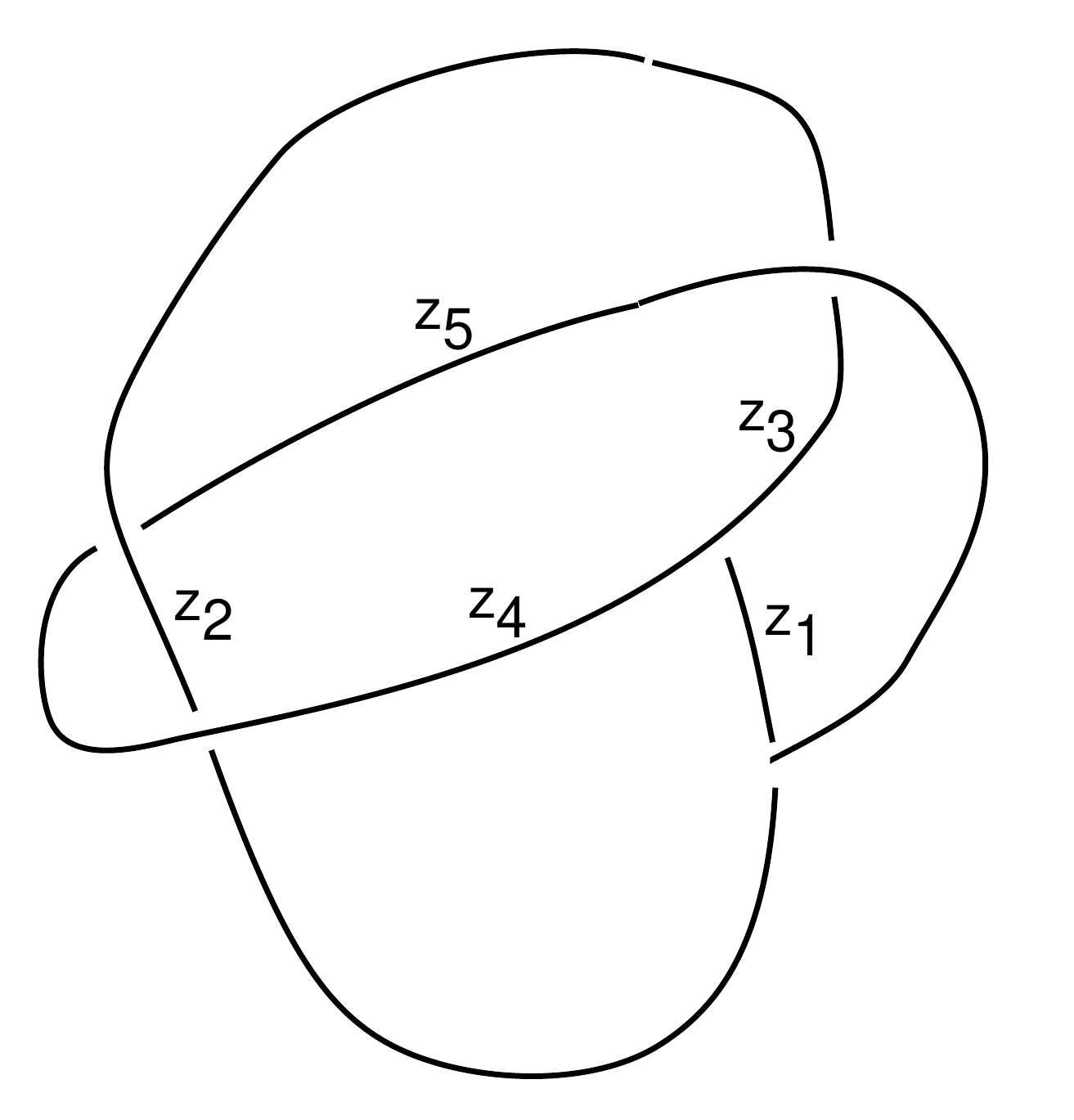}}
  \subfigure[$G$ with tetrahedra]
  {\includegraphics[scale=0.5]{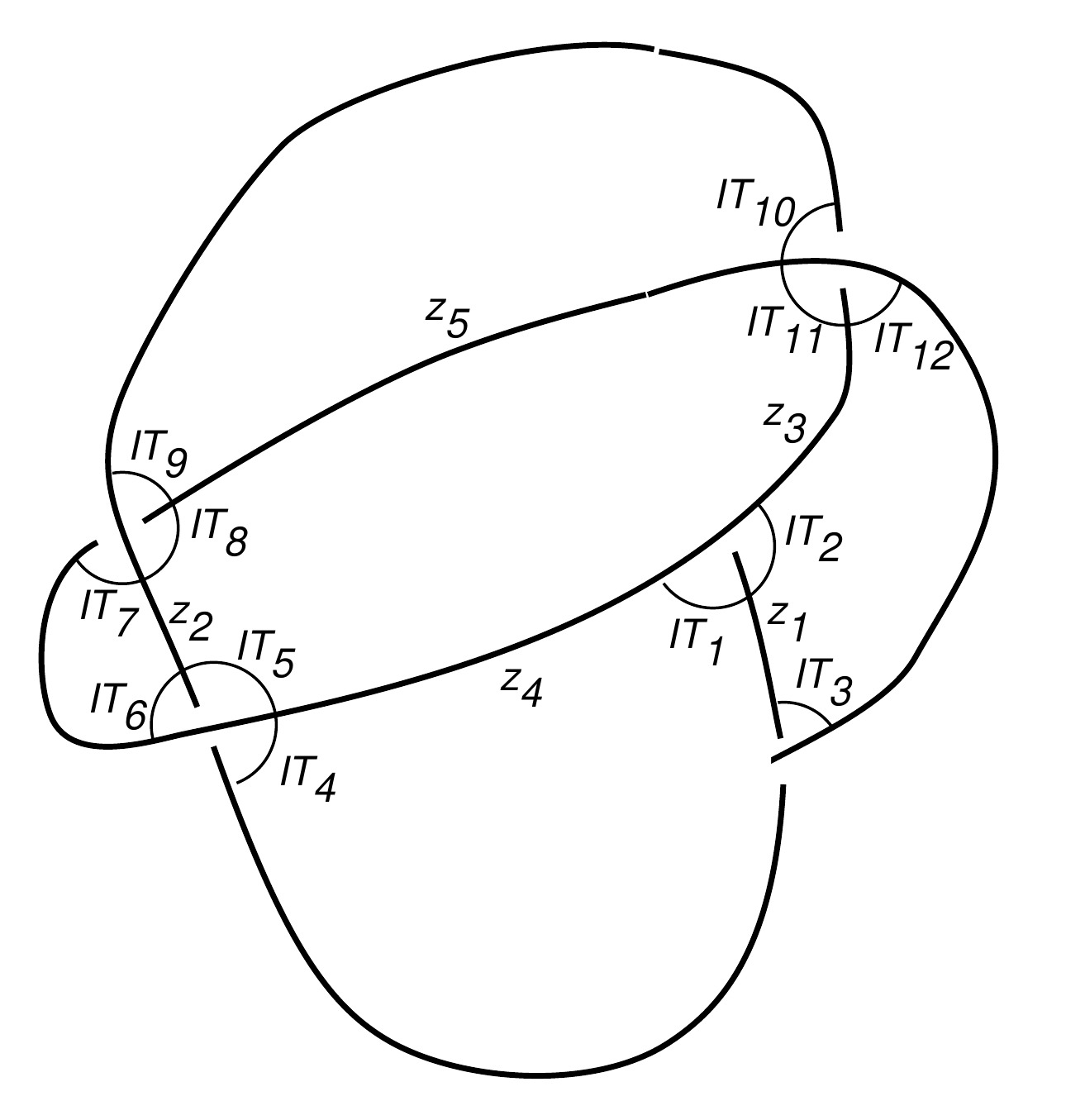}}
  \caption{Example}\label{pic1}
\end{figure}

Consider a hyperbolic knot $K$ and its diagram $D$. (See Figure \ref{pic1}(a).)
We define {\it sides} of $D$ as arcs connecting two adjacent crossing points.
For example, Figure \ref{pic1}(a) has 16 sides.

Now split a side of $D$ open so as to make a (1,1)-tangle diagram. (See Figure \ref{pic1}(b).)
Yokota assumed several conditions on the (1,1)-tangle diagrams.
(For detail, see Assumptions in \cite{Yokota10}.)
The Assumptions roughly means that we perform first and second Reidemeister moves on the tangle diagram
to reduce the crossing numbers as much as possible.
Also, let the two open sides be $I$ and $J$. Assume $I$ and $J$ are in an over-bridge and an under-bridge
respectively. Now extend $I$ and $J$ so that non-boundary endpoints of $I$ and $J$ become the first under-crossing point
and the last over-crossing point respectively, as in Figure \ref{pic1}(b). 
Then we assume the two non-boundary endpoints
of $I$ and $J$ are not the same. 
Yokota proved that we can always choose $I$ and $J$ with these conditions because, 
if not, then the knot should be the trefoil knot, which is not hyperbolic.
(See \cite{Yokota10} for details.)

We remove $I$ and $J$ on the tangle diagram and let the result be $G$. (See Figure \ref{pic1}(c).)
Note that, by removing $I\cup J$, some sides are glued together. 
(We consider the two trivalent points do not glue any sides.)
For example, in Figure \ref{pic1}(c), $G$ has 9 sides.
We define {\it contributing sides} as sides of $G$ which are not on the unbounded regions.
For example, Figure \ref{pic1}(c) has 5 contributing sides and 4 non-contributing sides.
We assign complex variables $z_1,\ldots,z_n$ to contributing sides and the real number 1 to non-contributing sides.

Now we draw small circles on each crossings and the trivalent points of $G$. 
Then remove some arcs of the circle that is
in the unbounded regions. Also remove two arcs that was on $I\cup J$. 
(See Figure \ref{pic1}(d) for the result.)
In this diagram, the survived arcs represent ideal tetrahedra and we can obtain an ideal triangulation of $S^3-K$
by gluing these tetrahedra. (See \cite{Yokota10} or \cite{YokotaPre} for gluing rules and details.)
We label each ideal tetrahedra $IT_1,IT_2,\ldots,IT_s$ and assign $t_m$ ($m=1,\ldots,s$)
as the complex parameter of $IT_m$.
We define $t_m$ as the counterclockwise ratio of the two adjacent sides of $IT_m$.
For example, in Figure \ref{pic1}(d),
\begin{eqnarray*}
t_1=\frac{z_1}{z_4},~ t_2=\frac{z_3}{z_1},~ t_3=\frac{z_1}{1},~ 
t_4=\frac{z_4}{1},~ t_5=\frac{z_2}{z_4}, ~t_6=\frac{1}{z_2},\\
t_7=\frac{z_2}{1},~ t_8=\frac{z_5}{z_2},~ t_9=\frac{1}{z_5},~
t_{10}=\frac{z_5}{1},~ t_{11}=\frac{z_3}{z_5}, ~t_{12}=\frac{1}{z_3}.
\end{eqnarray*}

Now we define the potential function $V(z_1,\ldots,z_n)$. For each tetrahedron $IT_m$, we assign dilogarithm
function as in Figure \ref{pic2}. Then $V(z_1,\ldots,z_n)$ is defined by 
the summation of all these dilogarithm functions.
We also define the sign $\sigma_m$ of $T_m$ by 
$$\sigma_m=\left\{\begin{array}{ll}
  1&\text{ if }~IT_m\text{ lies as in Figure \ref{pic2}(a)},\\
  -1&\text{ if }~IT_m\text{ lies as in Figure \ref{pic2}(b)}.
  \end{array}\right.$$
Then $V(z_1,\ldots,z_n)$ is expressed by
\begin{equation}\label{eq1}
V(z_1,\ldots,z_n)=\sum_{m=1}^s \sigma_m\left({\rm Li}_2(t_m^{\sigma_m})-\frac{\pi^2}{6}\right).
\end{equation}
  
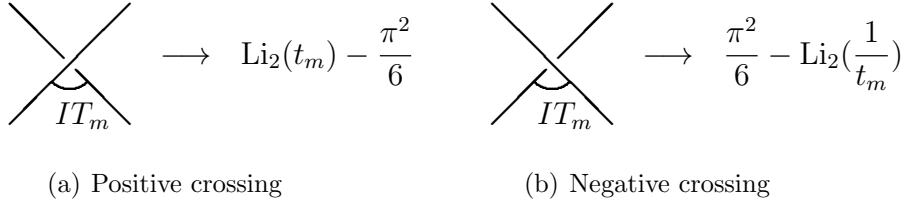
\begin{figure}
\centering
  \subfigure[Positive crossing]
  { {\setlength{\unitlength}{0.4cm}
  \begin{picture}(15,6)\thicklines
    \put(6,5){\line(-1,-1){4}}
    \put(6,1){\line(-1,1){1.8}}
    \put(3.8,3.2){\line(-1,1){1.8}}
    \put(4,3){\arc(-0.6,-0.6){90}}
    \put(3.5,1){$IT_m$}
    \put(7,3){$\displaystyle\longrightarrow ~~{\rm Li}_2(t_m)-\frac{\pi^2}{6}$}
  \end{picture}}}
  \subfigure[Negative crossing]
  { {\setlength{\unitlength}{0.4cm}
  \begin{picture}(15,6)\thicklines
    \put(2,5){\line(1,-1){4}}
    \put(2,1){\line(1,1){1.8}}
    \put(4.2,3.2){\line(1,1){1.8}}
    \put(4,3){\arc(-0.6,-0.6){90}}
    \put(3.5,1){$IT_m$}
    \put(7,3){$\displaystyle\longrightarrow ~~\frac{\pi^2}{6}-{\rm Li}_2(\frac{1}{t_m})$}
  \end{picture}}}
  \caption{Assignning dilogarithm functions to each tetrahedra}\label{pic2}
\end{figure}

For example, in Figure \ref{pic1}(d),
$$\sigma_1=\sigma_3=\sigma_5=\sigma_8=\sigma_{11}=1,
~\sigma_2=\sigma_4=\sigma_6=\sigma_7=\sigma_9=\sigma_{10}=\sigma_{12}=-1,$$
and
\begin{eqnarray*}
V(z_1,\ldots,z_5)=
{\rm Li}_2(\frac{z_1}{z_4})- {\rm Li}_2(\frac{z_1}{z_3})+ {\rm Li}_2(z_1)
-{\rm Li}_2(\frac{1}{z_4})+ {\rm Li}_2(\frac{z_2}{z_4})-{\rm Li}_2(z_2)\\
-{\rm Li}_2(\frac{1}{z_2})+{\rm Li}_2(\frac{z_5}{z_2})- {\rm Li}_2(z_5)
-{\rm Li}_2(\frac{1}{z_5})+{\rm Li}_2(\frac{z_3}{z_5})-{\rm Li}_2(z_3)+\frac{\pi^2}{3}.
\end{eqnarray*}

Note that $t_m$'s are ratios of $z_k$'s. Yokota, in Section 2.3 of \cite{Yokota10}, explained there is one-to-one correspondence between the complex parameters $\{t_m ~|~m=1,\ldots,s\}$ with certain conditions 
and a solution $\{z_k ~|~ k=1,\ldots,n\}$ of $\mathcal{H}$ in (\ref{hypeq}).
Therefore, if $(t_1^{(0)},\ldots,t_s^{(0)})$ of the ideal triangulation gives the hyperbolic structure to
$S^3-K$, we can find unique $(z_1^{(0)},\ldots,z_n^{(0)})$ which corresponds to $(t_1^{(0)},\ldots,t_s^{(0)})$.
Furthermore, the invariant trace field $k(K)=\mathbb{Q}(t_1^{(0)},\ldots,t_s^{(0)})$ coincides with 
$\mathbb{Q}(z_1^{(0)},\ldots,z_n^{(0)})$.
We call both of $(z_1^{(0)},\ldots,z_n^{(0)})$ and $(t_1^{(0)},\ldots,t_s^{(0)})$ {\it geometric solutions}.
%We also call $(z_1,\ldots,z_n)$ {\it essential} 
%when the corresponding $(t_1,\ldots,t_s)$ satisfies $t_m\neq 0,1,\infty$ for all $m=1,\ldots,s$.

\section{Proof of Corollary \ref{cor}}\label{sec3}

The following proposition and the proof were already appeared in \cite{YokotaPre} 
as Proposition 2.6, but we introduce them here for convenience.

\begin{pro}\label{prop}
Let $V$ be the potential function of a hyperbolic knot diagram $D$.
Let $(z_1,\ldots,z_n)$ be an essential solution of 
$\mathcal{H}=\left\{\exp\left(z_k\frac{\partial V}{\partial z_k}\right)=1~:~k=1,\ldots,n\right\}$
and $(t_1,\ldots,t_s)$ be the parameters of ideal tetrahedra corresponding to $(z_1,\ldots,z_n)$. 
Then
$${\rm Im} V_0(z_1,\ldots,z_n)=\sum_{m=1}^s D_2(t_m),$$
where $D_2(t)={\rm Im~Li_2}(t)+\log|t|\arg(1-t)$ for $t\in \mathbb{C}-\{0,1\}$ is the Bloch-Wigner function.
\end{pro}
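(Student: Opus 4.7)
The plan is to compute $V_{0}$ explicitly from (\ref{eq1}), extract the imaginary part, and then use the hyperbolicity equations to kill a leftover boundary term. Set $u_{k}:=\log z_{k}$ and $s_{m}:=t_{m}^{\sigma_{m}}$, and note that each $t_{m}$ is a ratio $z_{i_{m}}/z_{j_{m}}$ of two of the variables, so that $\log s_{m}=\sigma_{m}(u_{i_{m}}-u_{j_{m}})$. A direct computation gives $\partial V/\partial u_{k}=-\sum_{m}\log(1-s_{m})(\delta_{k,i_{m}}-\delta_{k,j_{m}})$, and hence
$$\sum_{k}u_{k}\,\frac{\partial V}{\partial u_{k}}=-\sum_{m}\sigma_{m}\log(1-s_{m})\log s_{m}.$$
Substituting this into the definition yields the functional expression
$$V_{0}=\sum_{m}\sigma_{m}\bigl[\mathrm{Li}_{2}(s_{m})-\tfrac{\pi^{2}}{6}+\log(1-s_{m})\log s_{m}\bigr].$$

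Next I would extract imaginary parts. Expanding the product $\log(1-s)\log s$ into real and imaginary parts and combining with $\mathrm{Im}\,\mathrm{Li}_{2}(s)=D_{2}(s)-\log|s|\arg(1-s)$ gives the elementary identity $\mathrm{Im}[\mathrm{Li}_{2}(s)+\log(1-s)\log s]=D_{2}(s)+\log|1-s|\arg s$. Summing over tetrahedra yields
$$\mathrm{Im}\,V_{0}=\sum_{m}\sigma_{m}D_{2}(s_{m})+\sum_{m}\sigma_{m}\log|1-s_{m}|\arg s_{m}.$$
The first sum collapses to $\sum_{m}D_{2}(t_{m})$ via the symmetry $D_{2}(1/z)=-D_{2}(z)$ (applied on the $\sigma_{m}=-1$ entries), so the proposition reduces to showing that the second sum vanishes at an essential solution of $\mathcal{H}$.

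The main content of the argument is this final step, and it is precisely where the hypothesis on $\mathcal{H}$ enters. Using $\arg s_{m}=\sigma_{m}\arg t_{m}=\sigma_{m}(\arg z_{i_{m}}-\arg z_{j_{m}})$ and reorganising the sum by the variable index $k$ gives
$$\sum_{m}\sigma_{m}\log|1-s_{m}|\arg s_{m}=\sum_{k}(\arg z_{k})\!\left(\sum_{m:\,i_{m}=k}\log|1-s_{m}|-\sum_{m:\,j_{m}=k}\log|1-s_{m}|\right).$$
The bracketed quantity equals $-\mathrm{Re}(\partial V/\partial u_{k})$, which vanishes because $\exp(\partial V/\partial u_{k})=1$ forces $\partial V/\partial u_{k}\in 2\pi i\mathbb{Z}$, a purely imaginary quantity. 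The main obstacle I expect to devote care to is the combinatorial bookkeeping that tracks the pair $(\sigma_{m},s_{m})$ together with the concrete ratio $z_{i_{m}}/z_{j_{m}}$ assigned to each tetrahedron; once that data is tabulated accurately, the rest is formal manipulation of logarithms and the dilogarithm.
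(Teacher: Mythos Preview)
Your argument is correct and follows essentially the same strategy as the paper's proof: both reduce the claim to the vanishing of a residual sum that is killed by $\mathrm{Re}\bigl(z_k\,\partial V/\partial z_k\bigr)=0$, together with the combinatorial relation between the shape parameters $t_m$ and the edge variables $z_k$, and both invoke $D_2(1/t)=-D_2(t)$ to pass from $s_m=t_m^{\sigma_m}$ to $t_m$. The only organizational difference is that you first collapse $V_0$ into a single tetrahedron sum $\sum_m \sigma_m\bigl[\mathrm{Li}_2(s_m)-\tfrac{\pi^2}{6}+\log(1-s_m)\log s_m\bigr]$ and then take imaginary parts, so the residual you must kill is $\sum_m \sigma_m \log|1-s_m|\,\arg s_m$; the paper instead takes imaginary parts first and is left with the companion residual $\sum_k \mathrm{Im}\bigl(z_k\,\partial V/\partial z_k\bigr)\log|z_k|-\sum_m \sigma_m \log|t_m|\,\arg(1-t_m^{\sigma_m})$, which it dispatches by the four-case analysis of Figure~\ref{pic3}. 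Your uniform bookkeeping $t_m=z_{i_m}/z_{j_m}$ is exactly what that case analysis encodes, so the two computations are the same up to reordering.
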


\begin{proof}
By the property $D_2(t)=-D_2(\frac{1}{t})$, it is enough to show
\begin{equation*}
{\rm Im} V_0(z_1,\ldots,z_n)-\sum_{m=1}^s \sigma_m D_2(t_m^{\sigma_m})=0.
\end{equation*}
Since $(z_1,\ldots,z_n)$ is an essential solution of $\mathcal{H}$, we know
$${\rm Re}\left(z_k \frac{\partial V}{\partial z_k}\right)=0,$$
for $k=1,\ldots,n$, and therefore
$$\sum_{k=1}^n{\rm Re}\left(z_k \frac{\partial V}{\partial z_k}\right){\rm Im}(\log z_k)=0.$$
Using the above and (\ref{eq1}), we have
$${\rm Im} V_0(z_1,\ldots,z_n)
=\sum_{m=1}^s {\rm Im}\left(\sigma_m{\rm Li}_2(t_m^{\sigma_m})\right)
+\sum_{k=1}^n{\rm Im}\left(z_k\frac{\partial V}{\partial z_k}\right)\log |z_k|.$$
Therefore, by the definition of $D_2(t)$,
\begin{eqnarray}
&&{\rm Im} V_0(z_1,\ldots,z_n)-\sum_{m=1}^s \sigma_m D_2(t_m^{\sigma_m})\nonumber\\
&&=\sum_{k=1}^n{\rm Im}\left(z_k\frac{\partial V}{\partial z_k}\right)\log |z_k|
-\sum_{m=1}^s \sigma_m\log|t_m|\arg(1-t_m^{\sigma_m}).\label{eq3}
\end{eqnarray}

Note that there are four possible cases of the position of $IT_m$ as in Figure \ref{pic3}.
(We allow $z_l=1$ in Figure \ref{pic3}.)

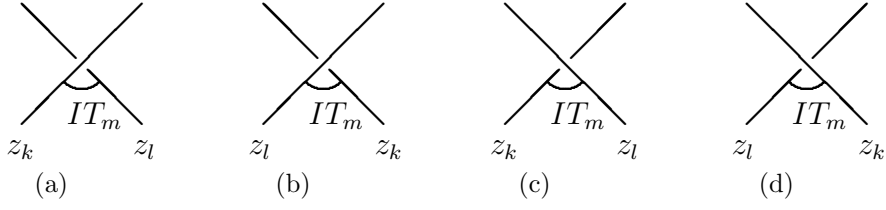
\begin{figure}[h]
\centering
  \subfigure[]
  { {\setlength{\unitlength}{0.4cm}
  \begin{picture}(7,6)\thicklines
    \put(6,5){\line(-1,-1){4}}
    \put(6,1){\line(-1,1){1.8}}
    \put(3.8,3.2){\line(-1,1){1.8}}
    \put(4,3){\arc(-0.6,-0.6){90}}
    \put(3.5,1){$IT_m$}
    \put(1.5,0){$z_k$}
    \put(5.7,0){$z_l$}
  \end{picture}}}
  \subfigure[]
  { {\setlength{\unitlength}{0.4cm}
  \begin{picture}(7,6)\thicklines
    \put(6,5){\line(-1,-1){4}}
    \put(6,1){\line(-1,1){1.8}}
    \put(3.8,3.2){\line(-1,1){1.8}}
    \put(4,3){\arc(-0.6,-0.6){90}}
    \put(3.5,1){$IT_m$}
    \put(1.5,0){$z_l$}
    \put(5.7,0){$z_k$}
  \end{picture}}}
  \subfigure[]
  { {\setlength{\unitlength}{0.4cm}
  \begin{picture}(7,6)\thicklines
    \put(2,5){\line(1,-1){4}}
    \put(2,1){\line(1,1){1.8}}
    \put(4.2,3.2){\line(1,1){1.8}}
    \put(4,3){\arc(-0.6,-0.6){90}}
    \put(3.5,1){$IT_m$}
    \put(1.5,0){$z_k$}
    \put(5.7,0){$z_l$}
  \end{picture}}}
  \subfigure[]
  { {\setlength{\unitlength}{0.4cm}
  \begin{picture}(7,6)\thicklines
    \put(2,5){\line(1,-1){4}}
    \put(2,1){\line(1,1){1.8}}
    \put(4.2,3.2){\line(1,1){1.8}}
    \put(4,3){\arc(-0.6,-0.6){90}}
    \put(3.5,1){$IT_m$}
    \put(1.5,0){$z_l$}
    \put(5.7,0){$z_k$}
  \end{picture}}}
  \caption{Four possible cases of the position of $IT_m$}\label{pic3}
\end{figure}

In the case of Figure \ref{pic3}(a), $\left(z_k\frac{\partial V}{\partial z_k}\right)$ contains a term $(\log(1-t_m))$, 
which corresponds to the differential of the dilogarithm function associated to $IT_m$ in Figure \ref{pic2}(a). 
Also, by definition, $t_m=\frac{z_l}{z_k}$ and $\sigma_m=1$,
so $\left(\sigma_m\log|t_m|\arg(1-t_m^{\sigma_m})\right)$ contains a term $(\log|z_k|\arg(1-t_m))$.
Therefore, the coefficient of $(\log|z_k|)$ corresponding to $IT_m$ in (\ref{eq3}) should be zero.

The other three cases can be easily verified with the same method. 
These implies the coefficient of any $\log|z_k|$ is zero, so (\ref{eq3}) becomes zero.

\end{proof}

Now we move to our original goal, the proof of Corollary \ref{cor}. 
We know $(z_1^{(j)},\ldots,z_n^{(j)})$ is an essential solution of $\mathcal{H}$
by Theorem \ref{thm}.
Let $(t_1^{(j)},\ldots,t_s^{(j)})$ be the parameters of the ideal tetrahedra $IT_1,\ldots, IT_s$
corresponding to $(z_1^{(j)},\ldots,z_n^{(j)})$.
(Recall $t_m^{(j)}$ is the counterclockwise ratio of $z_k^{(j)}$.)
Then, using Proposition \ref{prop}, we obtain
$${\rm Im} V_0(z_1^{(j)},\ldots,z_n^{(j)})=\sum_{m=1}^{s}D_2(t_m^{(j)})
=\sum_{m=1}^{s}D_2(\tau_j(t_m^{(0)}))={\rm Borel}(S^3-K)_j,$$
and the proof is completed.

\section{Properties of twist knots}\label{sec4}

Let $K$ be the twist knot $C(2,n+1)$ in Conway notation for $(n\geq 1)$. 
According to Section 7 of \cite{Cho10a},
$C(2,n+1)$ has $n$ contributing sides $z_1,\ldots,z_n$, and the potential function becomes
$$V(z_1,\ldots,z_n)={\rm Li}_2(\frac{1}{z_1})+\sum_{k=2}^n\left\{\frac{\pi^2}{6}
-{\rm Li}_2(z_{k-1})+{\rm Li}_2(\frac{z_{k-1}}{z_k})-{\rm Li}_2(\frac{1}{z_{k}})\right\}-{\rm Li}_2(z_n).$$
Also, the elements of the hyperbolicity equations 
$\mathcal{H}=\left\{\exp\left(z_k\frac{\partial V}{\partial z_k}\right)=1\right\}$ becomes
\begin{eqnarray}
1-\frac{z_1}{z_2}&=&1-\frac{1}{z_1}-z_1+1,\label{eq4}\\
1-\frac{z_k}{z_{k+1}}+\frac{1}{z_{k+1}}-\frac{1}{z_{k}}&=&1-\frac{z_{k-1}}{z_k}-z_k+z_{k-1} ~\text{ for }k=2,3,\ldots,n-1,\label{eq5}\\
1-\frac{1}{z_{n}}&=&1-\frac{z_{n-1}}{z_{n}}-z_n+z_{n-1}.\label{eq6}
\end{eqnarray}

Note that the existence of the geometric solution of $\mathcal{H}$ was already known as follows:
the triangulation of Sakuma-Weeks in \cite{Sakuma95} has the geometric solution and 
the hyperbolicity equations of Ohnuki's triangulation in \cite{Ohnuki05}
coincide with Sakuma-Weeks' one. Therefore, Ohnuki's triangulation has an essential solution
and the geometric solution.
Furthermore, the equation (7.5) of \cite{Cho10a} showed the relation between Ohnuki's geometric solution
and Yokota's one.

In this section, we will show 
$$[k(C(2,n+1)):\mathbb{Q}]=(\text{the number of the essential solutions of }\mathcal{H}).$$
Note that $[k(C(2,n+1)):\mathbb{Q}]=n+1$ was already proved in \cite{Hoste01}, so we will focus on
the fact 
$$(\text{the number of the essential solutions of }\mathcal{H})\leq n+1.$$

\begin{lem}\label{lem2}
Let $(z_1,\ldots,z_n)$ be an essential solution of (\ref{eq4}), (\ref{eq5}), (\ref{eq6}). Then, for $k=3,4,\ldots,n-1$,
$$\frac{1}{z_k}=1-z_{k-2}+z_n.$$
\end{lem}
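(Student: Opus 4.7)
The plan is to first simplify the hyperbolicity equations using the essentiality hypothesis, and then exhibit the claimed identity as the constancy of an auxiliary quantity via a short telescoping argument.

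First, I would clear denominators in (\ref{eq5}) by multiplying through by $z_k$ and regrouping, obtaining
$$\frac{z_k(1-z_k)}{z_{k+1}} \;=\; (1-z_k)\bigl(1 + z_k - z_{k-1}\bigr).$$
Since an essential solution satisfies $z_k \neq 1$, this reduces to the polynomial relation
$$\frac{z_k}{z_{k+1}} \;=\; 1 + z_k - z_{k-1}, \qquad k = 2, \ldots, n-1.$$
The same trick applied to (\ref{eq6}) produces $(1-z_n)\bigl(1 + z_n - z_{n-1}\bigr) = 0$, and hence $z_{n-1} = 1 + z_n$.

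Next, I would introduce $w_k := \frac{1}{z_k} + z_{k-2}$ for $k = 3,\ldots,n$, since the target identity is exactly $w_k = 1 + z_n$ for $k = 3,\ldots,n-1$. Dividing the simplified (\ref{eq5}) at index $k$ by $z_k$ gives $\frac{1}{z_{k+1}} = \frac{1}{z_k} + 1 - \frac{z_{k-1}}{z_k}$, while at index $k-1$ it says $\frac{z_{k-1}}{z_k} = 1 + z_{k-1} - z_{k-2}$. Substituting both into the difference $w_{k+1} - w_k$ makes the $z_{k-1}$ and $z_{k-2}$ terms cancel exactly, so $w_{k+1} = w_k$ for $k = 3,\ldots,n-1$, and therefore $w_3 = w_4 = \cdots = w_n$.

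It then remains only to evaluate the common value. Plugging $z_{n-1} = 1 + z_n$ into the simplified (\ref{eq5}) at $k = n-1$ yields $z_{n-2} = 1 + z_{n-1} - \frac{z_{n-1}}{z_n} = 1 + z_n - \frac{1}{z_n}$, so $w_n = \frac{1}{z_n} + z_{n-2} = 1 + z_n$. Combined with the telescoping, this gives $\frac{1}{z_k} + z_{k-2} = 1 + z_n$ for $k = 3, \ldots, n-1$, which is exactly the asserted formula.

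The main subtlety (and the only place where an essential-solution hypothesis is actually used) is the division by $(1 - z_k)$ and $(1 - z_n)$ in the first step; the rest of the argument is a pure polynomial identity. The conceptual obstacle is spotting the right auxiliary quantity $w_k = \frac{1}{z_k} + z_{k-2}$, which is what turns an apparently nonlinear three-term recursion into a clean telescoping.
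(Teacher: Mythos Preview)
Your proof is correct and follows essentially the same route as the paper's: both rely on the simplified form of (\ref{eq5}) obtained after dividing out the factor $1-z_k$ (this is where essentiality enters), together with a telescoping argument running from index $k$ up to $n$. The only cosmetic difference is that the paper first telescopes the raw equations (\ref{eq5})--(\ref{eq6}) to obtain the intermediate identity $1-\tfrac{1}{z_k}=1-\tfrac{z_{k-1}}{z_k}+z_{k-1}-z_n$ and then applies the simplified (\ref{eq5}) once, whereas you simplify first and repackage the telescoping as constancy of $w_k=\tfrac{1}{z_k}+z_{k-2}$.
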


\begin{proof}
Consider the equation (\ref{eq5}) for $k,k+1,\ldots,n-1$ and add all of them up with (\ref{eq6}). Then we obtain
\begin{equation}\label{eq7}
1-\frac{1}{z_k}=1-\frac{z_{k-1}}{z_k}+z_{k-1}-z_n.
\end{equation}
On the other hand, (\ref{eq5}) can be expressed by
$$1-\frac{z_{k-1}}{z_{k}}=-z_{k-1}\left(1-\frac{z_{k-2}}{z_{k-1}}\right).$$
Applying it to (\ref{eq7}), we obtain the result.

\end{proof}

To complete the proof, we will show that all $z_k$'s can be expressed by $z_1$ and $z_1$ satisfies
a polynomial equation with degree at most $n+1$. At first, summation of all (\ref{eq4}), (\ref{eq5}), (\ref{eq6}) gives
\begin{equation*}
\frac{1}{z_1}-\frac{1}{z_2}=1-z_n.
\end{equation*}
Applying (\ref{eq4}) in the form $\frac{1}{z_1}-\frac{1}{z_2}=-\left(1-\frac{1}{z_1}\right)^2$ to the above,
we obtain
\begin{equation}\label{eq8}
z_n=1+\left(1-\frac{1}{z_1}\right)^2.
\end{equation}

Note that
\begin{eqnarray*}
\frac{1}{z_2}=\frac{1}{z_1}+\left(1-\frac{1}{z_1}\right)^2=\frac{z_1^2-z_1+1}{z_1^2},\\
\frac{1}{z_3}=1-z_1+z_n=\frac{-z_1^3+3z_1^2-2z_1+1}{z_1^2}.
\end{eqnarray*}
Using the above, (\ref{eq8}), Lemma \ref{lem2} and the induction on $k$, any $z_k$ can be expressed by $z_1$.
Furthermore, we can express $\frac{1}{z_k}$ by
$$\frac{1}{z_k}=\frac{p_k(z_1)}{q_k(z_1)},$$
for $k=2,\ldots,n-1$, where 
$p_k(z_1), q_k(z_1)\in\mathbb{Z}[z_1]$ are polynomials with degree at most $k$.
Applying $\frac{1}{z_{n-1}}=\frac{p_{n-1}(z_1)}{q_{n-1}(z_1)}$ 
and (\ref{eq8}) to (\ref{eq6}) in the form $z_n=z_{n-1}-1$, we obtain
$$z_1^2 q_{n-1}(z_1)=(3z_1^2-2z_1+1)p_{n-1}(z_1).$$
Therefore, $z_1$ is a solution of a polynomial equation with degree at most $n+1$, and the proof is completed.

\vspace{5mm}
\begin{ack}
  This work is inspired by the lectures of Walter Neumann and many discussions with Christian Zickert.
  Also the author received many suggestions from Roland van der Veen, Jun Murakami, Hyuk Kim and Seonhwa Kim
  while preparing this article.  
  The author expresses his gratitude to them. 
  
  The author is supported by Grant-in-Aid for JSPS Fellows 21.09221.
\end{ack}

\bibliography{005}

\begin{thebibliography}{10}

\bibitem{Cho10a}
J.~Cho and J.~Murakami.
\newblock The complex volumes of twist knots via colored {J}ones polynomial.
\newblock To appear in J. Knot Theory Ramifications.

\bibitem{Cho09b}
J.~Cho, J.~Murakami, and Y.~Yokota.
\newblock The complex volumes of twist knots.
\newblock {\em Proc. Amer. Math. Soc.}, 137(10):3533--3541, 2009.

\bibitem{Neumann00}
D.~Coulson, O.~A. Goodman, C.~D. Hodgson, and W.~D. Neumann.
\newblock Computing arithmetic invariants of 3-manifolds.
\newblock {\em Experiment. Math.}, 9(1):127--152, 2000.

\bibitem{Dunfield99}
N.~M. Dunfield.
\newblock Cyclic surgery, degrees of maps of character curves, and volume
  rigidity for hyperbolic manifolds.
\newblock {\em Invent. Math.}, 136(3):623--657, 1999.

\bibitem{Epstein88}
D.~B.~A. Epstein and R.~C. Penner.
\newblock Euclidean decompositions of noncompact hyperbolic manifolds.
\newblock {\em J. Differential Geom.}, 27(1):67--80, 1988.

\bibitem{Francaviglia04}
S.~Francaviglia.
\newblock Hyperbolic volume of representations of fundamental groups of cusped
  3-manifolds.
\newblock {\em Int. Math. Res. Not.}, (9):425--459, 2004.

\bibitem{Hoste01}
J.~Hoste and P.~D. Shanahan.
\newblock Trace fields of twist knots.
\newblock {\em J. Knot Theory Ramifications}, 10(4):625--639, 2001.

\bibitem{Kashaev97}
R.~M. Kashaev.
\newblock The hyperbolic volume of knots from the quantum dilogarithm.
\newblock {\em Lett. Math. Phys.}, 39(3):269--275, 1997.

\bibitem{Reid1}
C.~Maclachlan and A.~W. Reid.
\newblock {\em The arithmetic of hyperbolic 3-manifolds}, volume 219 of {\em
  Graduate Texts in Mathematics}.
\newblock Springer-Verlag, New York, 2003.

\bibitem{Meyerhoff86}
R.~Meyerhoff.
\newblock Density of the {C}hern-{S}imons invariant for hyperbolic
  {$3$}-manifolds.
\newblock In {\em Low-dimensional topology and {K}leinian groups
  ({C}oventry/{D}urham, 1984)}, volume 112 of {\em London Math. Soc. Lecture
  Note Ser.}, pages 217--239. Cambridge Univ. Press, Cambridge, 1986.

\bibitem{Murakami00b}
H.~Murakami.
\newblock Optimistic calculations about the {W}itten-{R}eshetikhin-{T}uraev
  invariants of closed three-manifolds obtained from the figure-eight knot by
  integral {D}ehn surgeries.
\newblock {\em S\=urikaisekikenky\=usho K\=oky\=uroku}, (1172):70--79, 2000.
\newblock Recent progress towards the volume conjecture (Japanese) (Kyoto,
  2000).

\bibitem{Murakami01a}
H.~Murakami and J.~Murakami.
\newblock The colored {J}ones polynomials and the simplicial volume of a knot.
\newblock {\em Acta Math.}, 186(1):85--104, 2001.

\bibitem{Murakami02}
H.~Murakami, J.~Murakami, M.~Okamoto, T.~Takata, and Y.~Yokota.
\newblock Kashaev's conjecture and the {C}hern-{S}imons invariants of knots and
  links.
\newblock {\em Experiment. Math.}, 11(3):427--435, 2002.

\bibitem{Neumann04}
W.~D. Neumann.
\newblock Extended {B}loch group and the {C}heeger-{C}hern-{S}imons class.
\newblock {\em Geom. Topol.}, 8:413--474 (electronic), 2004.

\bibitem{NeumannLecture}
W.~D. Neumann.
\newblock Realizing arithmetic invariants of hyperbolic 3-manifolds.
\newblock {\em Interactions Between Hyperbolic Geometry, Quantum Topology and
  Number Theory}, pages 233--246, 2011.
\newblock Contemporary Mathematics.

\bibitem{Neumann99}
W.~D. Neumann and J.~Yang.
\newblock Bloch invariants of hyperbolic {$3$}-manifolds.
\newblock {\em Duke Math. J.}, 96(1):29--59, 1999.

\bibitem{Ohnuki05}
K.~Ohnuki.
\newblock The colored {J}ones polynomials of 2-bridge link and hyperbolicity
  equations of its complements.
\newblock {\em J. Knot Theory Ramifications}, 14(6):751--771, 2005.

\bibitem{Sakuma95}
M.~Sakuma and J.~Weeks.
\newblock Examples of canonical decompositions of hyperbolic link complements.
\newblock {\em Japan. J. Math. (N.S.)}, 21(2):393--439, 1995.

\bibitem{Takahashi85}
M.-o. Takahashi.
\newblock On the concrete construction of hyperbolic structures of
  {$3$}-manifolds.
\newblock {\em Tsukuba J. Math.}, 9(1):41--83, 1985.

\bibitem{Thurston}
W.~Thurston.
\newblock The geometry and topology of three-manifolds.
\newblock Lecture Note. available at
  http://www.msri.org/publications/books/gt3m/.

\bibitem{Tillmann05}
S.~Tillmann.
\newblock Degenerations of ideal hyperbolic triangulations.
\newblock http://arxiv.org/abs/math/0508295.

\bibitem{Veen}
R.~van~der Veen.
\newblock The homepage of {R}oland van der {V}een.
\newblock http://staff.science.uva.nl/\string~riveen/.

\bibitem{Yokota10}
Y.~Yokota.
\newblock On the complex volume of hyperbolic knots.
\newblock To appear in J. Knot Theory Ramifications.

\bibitem{YokotaPre}
Y.~Yokota.
\newblock On the volume conjecture for hyperbolic knots.
\newblock http://arxiv.org/abs/math/0009165.

\bibitem{Zickert09}
C.~K. Zickert.
\newblock The volume and {C}hern-{S}imons invariant of a representation.
\newblock {\em Duke Math. J.}, 150(3):489--532, 2009.

\end{thebibliography}
\bibliographystyle{abbrv}

{\sc 
\begin{flushleft}
  Department of Mathematics, Faculty of Science and Engineering, Waseda University, 3-4-1 Okubo, Shinjuku-ku, Tokyo 169-8555, Japan\\
E-mail: dol0425@gmail.com
\end{flushleft}}
\end{document}